\theoremstyle{definition}
\newtheorem{thm}{Theorem}[section]
\newtheorem{cor}[thm]{Corollary}
\newtheorem{lemma}[thm]{Lemma}
\newtheorem{defn}[thm]{Definition}
\newtheorem*{rmk}{Remark}
\newtheorem*{example}{Example}
\newcommand\nc{\newcommand}
\nc\Span{\text{\rm Span}}
\nc\Id{\text{Id}}
\nc \cc {\mathbb{C}}
\nc \ff {\mathbb{F}}
\nc \nn {\mathbb{N}}
\nc \pp {\mathbb{P}}
\nc \qq {\mathbb{Q}}
\nc \rr {\mathbb{R}}
\nc \zz {\mathbb{Z}}
\nc\cA{\mathcal{A}}
\nc\cC{\mathcal{C}}
\nc\cD{\mathcal{D}}
\nc\cE{\mathcal{E}}
\nc\cF{\mathcal{F}}
\nc\cG{\mathcal{G}}
\nc\cH{\mathcal{H}}
\nc\cK{\mathcal{K}}
\nc\cM{\mathcal{M}}
\nc\cO{\mathcal{O}}
\nc\cP{\mathcal{P}}
\nc\cR{\mathcal{R}}
\nc\cS{\mathcal{S}}
\nc\fm{\mathfrak{m}}
\nc\fp{\mathfrak{p}}
\nc\Tr{\text{Tr}}
\nc\into{\hookrightarrow}
\nc\st{\text{ s.t. }}
\nc\intense[1]{\textcolor[rgb]{1.00,0.00,0.00}{\textbf{#1}}}
\nc\Mat{\text{\rm Mat}}
\nc\GL{\text{\rm GL}}
\nc\SU{\text{\rm SU}}
\nc\SO{\text{\rm SO}}
\nc\SL{\text{\rm SL}}
\nc\Sp{\text{\rm Sp}}
\nc\EL{\text{\rm EL}}
\nc\GEM{\text{\rm GEM}}
\nc\Alt{\text{\rm Alt}}
\nc\Sym{\text{\rm Sym}}
\nc\inject{\hookrightarrow}
\nc{\mattwo}[4]{\left[\begin{array}{cc} #1  & #2\\  #3 & #4 \\ \end{array} \right]}
\nc{\matthree}[9]{\left[\begin{array}{ccc} #1  & #2 & #3\\  #4 & #5 & #6 \\ #7 & #8 & #9 \\ \end{array} \right]}
\nc{\vecttwo}[2]{\left[\begin{array}{c} #1 \\ #2 \\ \end{array} \right]}
\nc{\vectthree}[3]{\left[\begin{array}{c} #1 \\ #2\\  #3 \\ \end{array} \right]}
\nc{\del}{\partial}
\nc\onto{\twoheadrightarrow}
\nc\const{\text{const}}
\nc\rrp{\rr P}
\nc\ul{\underline}
\nc\ol{\overline}
\nc\uline{\underline}
\nc\oline{\overline}
\nc\oset{\overset}
\nc\uset{\underset}
\nc\heart{\heartsuit}
\nc\spade{\spadesuit}
\nc\club{\clubsuite}
\nc\marg[1]{\marginnote{\boxed{\text{#1}}}}
\nc\margq[1]{\marginnote{\textcolor[rgb]{1.00,0.00,0.00}{#1}}}
\nc\Char{\text{Char}}
\nc\Frac{\text{Frac}}
\nc\wo{\backslash}
\nc\diag{\text{diag}}
\nc\wtl{\widetilde}
\nc\nsubgp{\triangleleft}
\nc\Cay{\text{Cay}}
\nc\Hom{\text{Hom}}
\nc\Gp{\text{Gp}}
\nc\Set{\text{Set}}
\nc\la{\langle}
\nc\ra{\rangle}
\nc\wht{\widehat}
\nc\ddx[2]{\frac{\partial {#1}}{\partial {#2}}}
\nc\dddx[3]{\frac{\partial^2 {#1}}{\partial {#2}\partial{#3}}}
\nc\mult{\text{mult}}
\nc\supp{\text{supp}}
\nc\sign{\text{sign}}
\nc\meas{\text{meas}}
\nc\tmax{\text{max}}
\nc\red{\text{red}}
\nc\dhamm{d_{\text{Hamm}}}
\nc\tr{\text{tr}}
\title{Property RD and the Classification of Traces on Reduced Group $C^*$-algebras of Hyperbolic Groups}
\author{Sherry Gong\footnote{Supported by NSF.}}
\date{}
\begin{document}
\maketitle
\onehalfspacing

\begin{abstract} In this paper, we show that if $G$ is a non-elementary word hyperbolic group, $a \in G$ is an element, and the conjugacy class of $a$ is infinite, then all traces $\tau:C^*_{\red}(G) \to \cc$ vanish on $a$. Moreover, we completely classify all traces by showing that traces $\theta:C^*_{\red}(G) \to \cc$ are linear combinations of traces $\chi_g:C^*_{\red}(G) \to \cc$ given by 
\[\chi_{g}(h)=\begin{cases} 1 & h \in C(g) \\ 0 & \text{else}\end{cases},\]
where $g$ is an element with finite conjugacy class, denoted $C(g)$. We demonstrate these two statements by introducing a new method to study traces that uses Sobolev norms and the rapid decay property.
\end{abstract}

\section{Introduction}

Let $G$ be a countable discrete group, and let $\ell^2(G)$ denote the Hilbert space of square-summable functions $G \to \cc$. Let $B(\ell^2(G))$ denominate the algebra of bounded linear operators on $\ell^2(G)$. Elements of $G$ act on $\ell^2(G)$ as bounded linear operators, so there is a natural map $\cc[G] \to B(\ell^2(G))$, where $\cc[G]$ refers to the group algebra of $G$. We define the reduced group $C^*$-algebra of $G$, written $C^*_{\red}G$, to be the operator norm closure of $\cc[G]$ in $B(\ell^2(G))$. The reduced group $C^*$-algebra plays an important role in noncommutative geometry. Connes's book, \cite{Con}, provides a panoramic account of the research surrounding such algebras.

In this paper, we delve into the study of traces on reduced $C^*$-algebras on groups. A trace on $C^*_{\red}G$ is a (not necessarily positive) bounded linear map $\tau:C^*_{\red}G \to \cc$, not necessarily positive, that satisfies $\tau(xy)=\tau(yx)$ for all $x,y \in C^*_{\red}G$. It is easy to see that if $\tau$ is such a  map, then for any $g,h \in G$, we have $\tau(ghg^{-1})=\tau(h)$. Moreover, any bounded linear map $\tau:C^*_{\red}G \to \cc$ satisfying $\tau(ghg^{-1})=\tau(h)$ is a trace.

Observe that if $\tau$ is a trace on a $C^*$ algebra, then for any complex number $\lambda$, $\lambda\tau$ is also a trace. In this article, when we say that a $C^*$ algebra has unique trace, we will mean it has unique trace up to scalar multiples.

In \cite{WY}, Weinberger and Yu studied the degree of non-rigidity of manifolds using idempotents in $C^*$-algebras of groups associated to torsion elements in those groups, and explored whether they are linearly independent in $K$-theory. One approach to the question they introduced is to use traces to show that idempotents arising from torsion elements of the group, which constitute what Weinberger and Yu called the ``finite part of $K$-theory'' in \cite{WY}, are linearly independent in $K$-theory. This approach was explored in \cite{G}. The analysis of idempotents also led to results in the study of the moduli space of positive scalar curvature metrics, as in \cite{XY1} and \cite{XY2}.

In \cite{Pow}, Powers introduced the question of exploring traces on reduced group $C^*$-algebras. Furthermore, he proved that if $G$ is a non-abelian free group, then there is only one trace on $C^*_{\red}G$. De la Harpe also investigated this subject in \cite{dlH0}, \cite{dlH1}, and \cite{dlH2}, where he showed that if $G$ is a torsion-free non-elementary hyperbolic group, then $C^*_{\red}G$ has only one trace. In \cite{AM}, Arzhantseva and Minasyan derived similar conclusions for relatively hyperbolic groups.

In this paper, we introduce a new method to examine traces using property RD. Recall that a length function on a group $G$ is a function $l:G \to \zz_{ \geq 0}$, satisfying 
\begin{itemize}
\item $l(fg) \leq l(f)+l(g)$,
\item  $l(g^{-1})=l(g)$,
\item and $l(e)=0$,
\end{itemize}
such that for $S$ a finite subset of the non-negative integers, $l^{-1}(S)$ is also finite.

Given such a length function on a group, there is a norm $\|\cdot\|_{H^s}$ on $\cc G$, namely
\[\|\sum c_g g\|_{H^s}^2 = \sum |c_g|^2(1+l(g))^{2s}.\]
Let $H^s(G)$ denote the completion of $\cc G$ with respect to this norm. Then we say $G$ has rapid decay property, or property RD, if for some $s$ there is a length function on $G$ and a constant $C$ such that	
\[\|\sum c_g g\|_{\red} \leq C\|\sum c_g g\|_{H^s}.\]

Property RD has important applications to developments pertaining to the Novikov conjecture and to the Baum-Connes conjecture, as studied by Connes and Moscovici in \cite{CM}. Jolissaint researched this property extensively in \cite{Joli1} and \cite{Joli-K}, as did Haagerup, who showed earlier in 1979, in the famous paper \cite{H}, that free groups have property RD.

For a countable discrete group $G$ with length function $l$ and a subset $X \subset G$, we say $X$ has polynomial growth if for $X_l= \{g| g \in X, l(g)=l\}$ and $n_{l}=|X_l|$, there is a polynomial $P$ satisfying $n_{l}<P(l)$. When such a $P$ does not exist, $X$ is said to grow super-polynomially. We say $X$ grows at least exponentially if there are $a,b,c$ with $a,b>0$ such that $n_{l}>ae^{bl}+c$.

In \cite{G}, we showed that if the conjugacy class of an element $g$ in a group $G$ with property RD grows super-polynomially, then any trace $\tau:C^*_{\red}G \to \cc$ vanishes on $g \in C^*_{\red}G$. We will explain this conclusion in more detail and apply it to hyperbolic groups in section \ref{on traces}, after exploring the growths of conjugacy classes in such groups in section \ref{on conjugacy classes}. In section \ref{examples}, we apply the results of section \ref{on traces} to give some examples of hyperbolic groups with unique trace and some with more than one trace.

We will use this method to classify all traces on all non-elementary hyperbolic groups, extending de la Harpe's result on torsion-free non-elementary hyperbolic groups.

The author would like to thank Prof. Guoliang Yu for his helpful advice and guidance, and Prof. David Kerr for his suggestion to look into questions regarding unique traces on reduced group $C^*$-algebras.

\section{On conjugacy classes}
\label{on conjugacy classes}	

In this section, we prove that if a conjugacy class in a hyperbolic group is infinite, then it grows exponentially. In the next section, we will use this development to find all the aforementioned traces.

Let $G$ be a finitely generated discrete group and $\delta$ be a positive real number. Consider the metric space obtained by taking the Cayley graph of $G$, and identifying each edge with the unit interval. We say $G$ is $\delta$-hyperbolic if for any $x,y,z \in G$,
\[[x,y] \subset B_{\delta}([y,z] \cup [z,x]),\]
where $[x,y]$ is the geodesic between $x$ and $y$ in the metric space, and for a subset $X \subset G$, $B_{\delta}(X)$ denotes the set of points $a$ such that there is $b \in X$ with $d(a,b)<\delta$.

We say that a group $G$ is elementary whenever $G$ contains $\zz$ as a finite index subgroup. We analyse the problem of conjugacy class growth in non-elementary hyperbolic groups.

\begin{thm} Let $G$ be a non-elementary hyperbolic group and let $a \in G$ be an element. Assume that $C(a)$, the conjugacy class of $a$ in $G$ is infinite. Then $C(a)$ grows at least exponentially.
\label{exp conj class}
\end{thm}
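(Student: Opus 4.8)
The plan is to analyze the centralizer and conjugacy class via the action on the Gromov boundary and the structure of infinite-order elements in hyperbolic groups.

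The plan is to embed a free group of rank two into the conjugacy class $C(a)$ in a length-controlled way, so that the known exponential growth of the free group forces exponential growth of $C(a)$. Recall that the conjugation map $g \mapsto gag^{-1}$ descends to a bijection between the coset space $G/Z(a)$ and $C(a)$, where $Z(a) = \{g : ga = ag\}$ is the centralizer; here $C(a)$ infinite is equivalent to $[G:Z(a)] = \infty$. Since $l(gag^{-1}) \le l(g) + l(a) + l(g^{-1}) = 2l(g) + l(a)$, any family of elements $g$ of bounded length produces conjugates of bounded length, so it suffices to produce exponentially many cosets $gZ(a)$ with bounded-length representatives.

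First I would produce a free subgroup in ``general position'' with respect to $a$. Because $G$ is non-elementary hyperbolic, it contains infinitely many pairwise independent loxodromic elements, and their attracting/repelling fixed points are dense in the boundary $\partial G$. Using that $\partial G$ is perfect and that $a$ acts on it as a homeomorphism, I would choose two independent loxodromic elements $p, q$ whose four fixed points, together with their images under $a$, are all distinct. Passing to high powers and applying a ping-pong argument, I obtain a free subgroup $F = \langle p^N, q^N \rangle$ of rank two that is quasi-isometrically embedded (quasiconvex) in $G$, whose limit set $\Lambda(F)$ is confined to an arbitrarily small neighborhood of $\{p^{\pm}, q^{\pm}\}$; in particular $F$ can be arranged so that $\Lambda(F) \cap a\Lambda(F) = \emptyset$.

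Next I would check that $F \cap Z(a) = \{e\}$, which makes $w \mapsto waw^{-1}$ injective on $F$ (two elements $w, w' \in F$ give the same conjugate iff $w^{-1}w' \in F \cap Z(a)$). Suppose $w \in F$ is nontrivial; since $F$ is free and quasiconvex, $w$ is loxodromic with exactly two fixed points $w^{\pm} \in \Lambda(F)$. If $w$ commuted with $a$, then from $aw = wa$ one computes that $a$ preserves the fixed-point set $\{w^{+}, w^{-}\}$ setwise, so $\{w^{+}, w^{-}\} \subseteq \Lambda(F) \cap a\Lambda(F) = \emptyset$, a contradiction. Hence no nontrivial element of $F$ centralizes $a$, and $F$ injects into $C(a)$. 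I would emphasize that this step is what lets the argument treat torsion and infinite-order $a$ uniformly: rather than analyzing the (possibly non-elementary) centralizer $Z(a)$ directly, we only use that any nontrivial $w \in F$ is itself loxodromic.

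Finally I would read off the growth. Since $F$ is quasi-isometrically embedded, there is a constant $C$ with $l(w) \le C|w|_F$ for $w \in F$, so the at least $3^{\,r}$ reduced words of $F$-length $\le r$ all satisfy $l(w) \le Cr$, and their images $waw^{-1}$ are distinct elements of $C(a)$ of length $\le 2Cr + l(a)$. Writing $L = 2Cr + l(a)$ gives at least $3^{\,(L-l(a))/(2C)}$ elements of $C(a)$ of length $\le L$, an exponential lower bound on the counting function of $C(a)$; the pointwise bound $n_l > ae^{bl}+c$ of the definition then follows by a standard pigeonhole argument over the range of lengths. I expect the main obstacle to be the general-position step: arranging $\Lambda(F) \cap a\Lambda(F) = \emptyset$ requires care when $a$ has finite order, since then $a$ has no distinguished pair of boundary fixed points, and one must genuinely exploit density of loxodromic fixed points and minimality of the boundary action to steer the chosen points off the $a$-orbit.
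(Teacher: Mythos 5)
Your argument is correct in outline, but it is a genuinely different proof from the paper's. The paper works entirely inside the Cayley graph: it takes $x\in C(a)$ of large length $n$ (using infiniteness of $C(a)$ to get such $x$), maps the ball $B_{n/7}$ into $C(a)$ by $g\mapsto gxg^{-1}$, and uses thin triangles to bound each fiber of this map by $O(n^2)$; exponential growth of $|B_{n/7}|$ (Koubi's theorem for non-elementary hyperbolic groups) then forces exponential growth of the image. You instead work on the Gromov boundary: you build, by ping-pong, a free quasiconvex subgroup $F=\langle p^N,q^N\rangle$ whose limit set is disjoint from its $a$-translate, deduce $F\cap Z(a)=\{e\}$ from the fact that a nontrivial $w\in F$ is loxodromic and a commuting $a$ would have to preserve $\{w^+,w^-\}\subseteq\Lambda(F)\cap a\Lambda(F)$, and then inject exponentially many bounded-length elements of $F$ into $C(a)$ via $w\mapsto waw^{-1}$. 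Each approach has its merits: yours isolates exactly where the hypothesis enters (if $C(a)$ is infinite then $a$ is not in the finite kernel of $G\to\mathrm{Homeo}(\partial G)$, so it acts nontrivially on the boundary) and avoids both the fiber-counting lemma and the appeal to Koubi; the paper's is purely metric/combinatorial, never invokes the boundary, and is structured so as to generalize to weaker thin-triangle hypotheses. The one step you should write out carefully is the general-position claim you yourself flag: you need that $C(a)$ infinite forces $a\notin\ker(G\to\mathrm{Homeo}(\partial G))$ (that kernel is a finite normal subgroup, all of whose elements have finite conjugacy class), and then that the set of loxodromic fixed-point pairs is dense in the set of distinct boundary pairs while the graphs of $a$ and $a^{-1}$ are closed and nowhere dense in $\partial G\times\partial G$ (using that $\partial G$ is perfect), so that eight pairwise distinct points $p^{\pm},q^{\pm},ap^{\pm},aq^{\pm}$ can always be found, whether or not $a$ has finite order. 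Finally, note that both your count and the paper's establish exponential growth of $|C(a)\cap B_L|$ (equivalently, of counts over length ranges) rather than a pointwise lower bound on $n_l$ for every single $l$; this matches the level of precision of the paper's own proof.
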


\begin{rmk} Note that there is a well known theorem regarding exponential conjugacy growth in hyperbolic groups, but in that theorem, ``conjugacy growth'' refers to the growth of the number of conjugacy classes that intersect a certain ball, not the number of elements in a particular conjugacy class, rendering the theorem akin to an opposite of this one.
\end{rmk}

\begin{proof}
For $t$ a non-negative real number, we designate $B_t$ the ball in the Cayley graph of $G$ around the identity of radius $t$.

To show this theorem, we want to consider some element $x \in C(a)$ of length $n$, where $n$ is large, and examine its conjugates $gxg^{-1}$, where $\ell(g)<\frac{n}{7}$. The number of $g$ with $\ell(g)<\frac{n}{7}$ grows exponentially in $n$. Here, $\ell(gxg^{-1})$ is in the range $\left[\frac{5n}{7}, \frac{9n}{7}\right]$, whereupon if all different $g$ in $\ell(g)<\frac{n}{7}$ gave rise to different $gxg^{-1}$, we would be done, by virtue of having shown that the conjugacy class of $a$ has exponentially many elements in the range.

Unfortunately, this is not quite true; there could be $g$ and $h$ with $\ell(g), \ell(h)<\frac{n}{7}$, where $gxg^{-1}=hxh^{-1}$. To surmount this problem, we will bound how much it can happen. More precisely, we will prove the following lemma:

\begin{lemma} Let $G$ be a non-elementary hyperbolic group and $a \in G$ be a fixed element. Let $x$ be in $C(a)$ and $n = \ell(x)$. Let $y \in C(a)$ satisfy $\frac{5n}{7}<\ell(y)<\frac{9n}{7}$, $m = \ell(y)$, and $k<\frac{n}{7}$ be an integer. Then the number of $h$ in $G$ such that
\begin{enumerate}
\item $\ell(h)=k$
\item $hxh^{-1}=y$
\end{enumerate}
is $O(k)$. That is, it is bounded above by $C \cdot k+C_1$, where $C$ and $C_1$ are constants that depend only on $G$ and $a$, and independent of $n$ and $k$.

Thus, the number of elements $h \in G$ such that
\begin{enumerate}
\item $\ell(h)<\frac{n}{7}$
\item $hxh^{-1}=y$
\end{enumerate}
is $O(n^2)$, meaning it is bounded by $C_2 \cdot n^2+C_3$, where $C_2$ and $C_3$ are constants that depend only on $G$ and $a$. 

\end{lemma}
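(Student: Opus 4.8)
The second assertion is a formal consequence of the first, so the plan is to concentrate on the per-sphere bound: once we know that for each $k$ the number of $h$ with $\ell(h)=k$ and $hxh^{-1}=y$ is at most $Ck+C_1$, summing over all integers $0\le k<\tfrac n7$ gives $\sum_{k}(Ck+C_1)=O(n^2)$, which is the stated $C_2n^2+C_3$.

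First I would reduce the count to a statement about the centralizer $Z(x)=\{g\in G:gx=xg\}$. If $h_1xh_1^{-1}=h_2xh_2^{-1}=y$, then $h_2^{-1}h_1$ commutes with $x$, so the set of all $h$ with $hxh^{-1}=y$ is either empty or a single left coset $h_0 Z(x)$. Choosing $h_0$ to be a solution of length exactly $k$ (if none exists the count is $0$), every other solution is $h=h_0w$ with $w\in Z(x)$ and $\ell(w)\le \ell(h_0)+\ell(h)=2k$. Hence the quantity to bound is at most $\#\{w\in Z(x):\ell(w)\le 2k\}=|Z(x)\cap B_{2k}|$, and the task becomes the following: the centralizer of a length-$n$ element meets the ball $B_{2k}$ in only $O(k)$ points, with constants depending on $G$ and $a$ alone.

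The geometric heart is to show that short elements commuting with the long element $x$ are trapped near the geodesic $[e,x]$. For $w\in Z(x)$ consider the geodesic quadrilateral with vertices $e,\,w,\,wx,\,x$. Since $wx=xw$, two of its sides, $[e,w]$ and $[wx,x]=x[e,w]$, have length $\ell(w)\le 2k<\tfrac{2n}{7}$, while the opposite sides $[w,wx]=w[e,x]$ and $[x,e]$ have length $n$. By $2\delta$-thinness of quadrilaterals, every point of $[e,x]$ at distance more than $2k+2\delta$ from both endpoints lies in $B_{2\delta}([w,wx])$; since this excludes only an $O(k)$-portion near each end, the translate $w[e,x]$ must $2\delta$-fellow-travel $[e,x]$ along a segment of length at least $\tfrac{3n}{7}-4\delta>0$. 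When $x$ has infinite order this forces $w$ to preserve the quasi-axis of $x$, so $w$ lies in a tube $B_{C_0}([e,x])$ of radius $C_0=C_0(\delta)$ and acts by a bounded translation along it. Intersecting $B_{C_0}([e,x])$ with $B_{2k}$ leaves only points within $C_0$ of the initial segment of $[e,x]$ of length $2k+C_0$, and by local finiteness such a segment-neighborhood contains at most $(2k+C_0+1)\,|B_{C_0}|=O(k)$ group elements.

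The hard part will be to make this last count genuinely linear (not merely quadratic) and uniform over the whole class $C(a)$. Linearity requires that the commuting elements occupy the tube with multiplicity bounded per unit length, which is exactly the statement that $Z(x)$ is virtually cyclic with a positive lower bound on the spacing of its elements along the axis; that spacing is controlled by the translation length $\tau(x)$. Since $\tau$ and the index of $\langle x\rangle$ in the maximal virtually cyclic subgroup containing $x$ are conjugacy invariants and $x\in C(a)$, these constants coincide with those of $a$ and are therefore uniform in $x$, as needed. The remaining subtlety is the case where $x$ has finite order: then $x$ is elliptic with no axis, so the fellow-traveling argument does not by itself confine $w$ to an $O(\delta)$-tube, and $Z(x)$ may be a large subgroup. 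I would handle this case separately by exploiting that $Z(x)$ is a far-away conjugate of $Z(a)$, so any of its elements lying in $B_{2k}$ with $2k\ll n=\ell(x)$ is forced to be short; a long geodesic $[e,x]$ representing a torsion element admits only $O(k)$ short commuting translates (indeed typically $O(1)$). Combining the two cases yields the uniform $O(k)$ bound and hence the $O(n^2)$ estimate.
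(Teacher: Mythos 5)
Your reduction to the centralizer is a genuinely different route from the paper's. The paper never mentions $Z(x)$: it parametrizes the solutions $h$ directly, by placing the geodesic quadrilateral with vertices $e$, $h^{-1}$, $xh^{-1}$, $hxh^{-1}$ in the Cayley graph, applying $\delta$-thinness to the two triangles cut off by the diagonal $[e,xh^{-1}]$ to produce mutually close points $E$ on $[e,h^{-1}]$, $F$ on the translated geodesic of $x$, and $H$ on the geodesic of $y$, and then observing that for each of the $k+1$ values of $i=\ell(AE)$ the group elements $AE$ and $EB$ (hence $h$) admit only boundedly many choices. Your observation that the solution set is a single coset $h_0Z(x)$, so that the count is at most $|Z(x)\cap B_{2k}|$, is correct and arguably cleaner, and in the infinite-order case the $O(k)$ bound does follow from conjugacy-invariance of the stable translation length $\tau(a)>0$ and of the index $[Z(a):\langle a\rangle]$: each coset of $\langle x\rangle$ in $Z(x)$ meets $B_{2k}$ in at most $|\langle x\rangle\cap B_{4k}|\le 8k/\tau(a)+1$ points. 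Be aware, though, that some of your intermediate geometric claims are wrong as stated: since $\ell(x)=n$ while $\tau(x)=\tau(a)$ is a fixed constant, the basepoint $e$ --- and hence every element of $Z(x)$ --- lies at distance roughly $n/2$ from the axis of $x$, so $w$ is not confined to a $C_0(\delta)$-tube around $[e,x]$ and does not act by a ``bounded'' translation; fortunately the coset count above does not need this.

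The genuine gap is the torsion case, which you defer with an unproved assertion. This case cannot be waved away: the entire point of the theorem is to handle hyperbolic groups with torsion, and for a torsion element $a$ with infinite conjugacy class the centralizer $Z(a)$ --- hence $Z(x)=gZ(a)g^{-1}$ --- can be an infinite group of exponential growth (take $a$ to be the central involution of $\zz/2\times F_2$ inside $(\zz/2\times F_2)*\zz/2$), so ``$Z(x)$ is a far-away conjugate of $Z(a)$'' does not by itself control $|Z(x)\cap B_{2k}|$, and your sentence ``any of its elements lying in $B_{2k}$ is forced to be short'' is vacuous. The case can be repaired, and uniformly with the infinite-order one: for $w\in Z(x)\cap B_{2k}$, thinness of the quadrilateral with vertices $e,\,w,\,wx=xw,\,x$ (two sides of length $\le 2k<2n/7$, two of length $n$) shows that the midpoint $p$ of $[e,x]$ satisfies $d(wp',p)\le 2\delta$ for some vertex $p'\in[e,x]$ with $|d(e,p')-n/2|\le 2k+2\delta$; since left multiplication is a free action, $w$ is determined by the pair $(p',wp')$, of which there are at most $(4k+4\delta+1)\cdot|B_{2\delta}|=O(k)$. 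Replacing both of your case analyses by this single argument closes the gap, and brings your proof essentially back to the paper's, with $w$ playing the role of $h$.
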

\begin{proof}[Proof of Lemma] 
In triangle $ABC$ in a graph, let $a$, $b$, and $c$ denote the lengths of the sides; that is, $c=\ell(AB)$, $b=\ell(AC)$ and $a=\ell(BC)$. We say that triangle $ABC$ is $\delta_1$-thin if for points $D \in BC$, $E \in AC$ and $F \in AB$ with $\ell(AE)=\ell(AF)=\frac{b+c-a}{2}$, $\ell(BD)=\ell(BF)=\frac{a+c-b}{2}$ and $\ell(CD)=\ell(CE)=\frac{a+b-c}{2}$, we have $\ell(DE),\ell(DF),\ell(EF)<\delta_1$. Proposition 2.12 in \cite{Papa} says that for a $\delta$-hyperbolic graph, there exists $\delta_1$ such that all triangles are $\delta_1$-thin.

Let us replace $\delta$ with the max of $\delta$ and $\delta_1$, rendering both the graph $\delta$-hyperbolic and all triangles are $\delta$-thin.

Moreover, let us assume $n>5 \delta$, which we may, for there are finitely many cases for $n \leq 5 \delta$, and we can thus shove those cases into the constant $C_1$.

\begin{figure}[ht!]
\centering
\includegraphics[width=150mm]{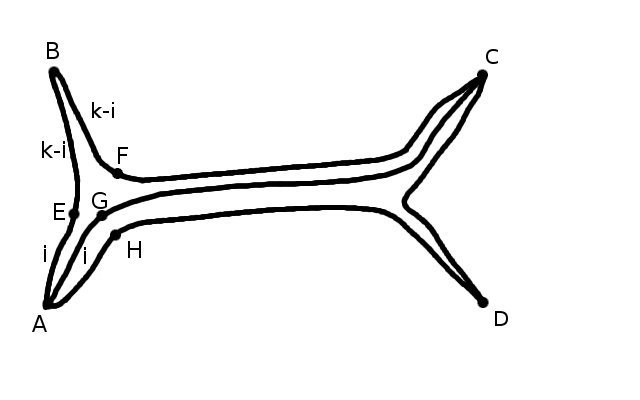}
\caption{}
\label{overflow}
\end{figure}

Choose and fix geodesics $\gamma$ from $1$ to $x$, $\rho$ from $1$ to $h$, and $\nu$ from $1$ to $y$. Let $\rho^{-1}$ denote the geodesic obtained by traversing $\rho$ backwards. Note that geodesics may not be unique, especially when there are torsion elements, but here we have fixed them ahead of time.

Consider the geodesic picture shown in the diagram, where $A=e \in G$, $B=h^{-1}$, $C=xh^{-1}$ and $D=hxh^{-1}$, and $AB$, $BC$, $CD$, and $AD$ are the geodesics $\rho^{-1}$, $\gamma$, $\rho$, and $\nu$ respectively. (In the picture, the Cayley graph is formed by taking generators $S$ and joining $g$ to $sg$ for $s \in S$. The picture is therefore invariant under the right action of $G$, so, for example, when we say that $BC$ is $\gamma$, we mean that $BC$ is the right translate of $\gamma$ by $h^{-1}$.)

Now look at triangle $ABC$. Let $i = \frac{\ell (AB)+\ell (AC)-\ell(BC)}{2}$. There are $E \in AB$, $F \in BC$ and $G \in AC$ such that $\ell(AE)=\ell(AG)= i = \frac{\ell (AB)+\ell (AC)-\ell(BC)}{2}$, $\ell(BE)=\ell(BF) = k-i = \frac{\ell (AB)+\ell (BC)-\ell(AC)}{2}$, and $\ell(CG) = \ell (CF) = \frac{\ell (AC)+\ell (BC)-\ell(AB)}{2}$. Applying that triangle $ABC$ is $\delta$-thin, we see that $\ell(EF),\ell(EG),\ell(FG)<\delta$. Applying hyperbolicity condition triangle $ACD$, we obtain that for $G \in AC$, there is $H \in AD \cup CD$ such that $\ell(GH)<\delta$. 

We claim that $H \in AD$. If not, $H$ would be in $CD$, but then $\ell(AC) \leq \ell(AG)+\ell(GH)+\ell(HC)$, $\ell(AG)=\ell(AE)<k$, and $\ell(HC)<k$, whereupon $\ell(AC)<2k+\delta$. However, $\ell(AC)>n-k$, from which we deduce $n-k<2k+\delta$, which implies $n<3k+\delta<\frac{3n}{7}+\delta$. The latter is impossible because $n>5\delta$.

Thus $H \in AD$. Ergo, we have $H \in AD$ with $\ell(EH)<\ell(EG)+\ell(GH)<2\delta$. Also, $\ell(AE)=\ell(AG)$, and consequently, 
\[\ell(AE)-\delta<\ell(AE)-\ell(GH)=\ell(AG)-\ell(GH)<\]
\[\ell(AH)<\ell(AG)+\ell(GH)=\ell(AE)+\ell(GH)<\ell(AE)+\delta.\]

Hence, for fixed $x$ and $y$, and for fixed $i$ with $0 \leq i \leq k$, which is the length of $AE$, we will estimate an upper bound for the number of possible configurations as in the diagram. We have fixed $F \in BC$, by way of the condition $\ell(BF)=\ell(BE)=k-\ell(AE)=k-i$, and $BC$ has to be the $h^{-1}$ right translate of the geodesic for $x$. In consequence, $BF$ is fixed; it is just the product of the first $k-i$ edges of the geodesic of $x$.

$BE$, as a group element, has distance at most $\delta$ from $BF$, meaning that there are at most $|B_\delta|$ possibilities for $BE$.

For $i$ fixed, as above, there are at most $2 \delta$ possibilities for $AH$, due to the fact that $H$ is along $AD$, which is the geodesic of $y$, and we showed above that $i-\delta=\ell(AE)-\delta<\ell(AH)<\ell(AE)+\delta=i+\delta$. Accordingly, $AH$ is the product of the first $t$ edges of the geodesic of $y$.

There are at most $2\delta$ choices of $AH$, and for each such choice there are at most $|B_{2\delta}|$ choices of $AE$, for $AE$ and $AH$ are within $2\delta$ of each other as group elements. Thus, in total, for fixed $i$, there are at most $2\delta |B_{2\delta}|$ possibilities for $AE$.

Consequently, for fixed $i = \ell(AE)=k-\ell(BE)$, there are at most $2\delta |B_{2\delta}|$ choices of $AE$ and $|B_{\delta}|$ choices of $BE$, which implies that there are at most $2\delta |B_{\delta}| |B_{2\delta}|$ choices of $AB=EB \cdot AE$. But $AB=h^{-1}$, meaning there are at most $2\delta|B_{\delta}||B_{2\delta}|=O(1)$ choices of $h$ for a given $k$ and $i$.

There are $k$ choices of $i \in [0,k]$, and accordingly there are in total at most $k(2\delta |B_{\delta}| |B_{2\delta}|)$ possible $h$ for a given $k$, allowing us to deduce the first statement in the lemma.

Summing up these values for different $k<n/7$ is adding up $n/7$ summands, each of which is $O(n)$, resulting in $O(n^2)$ possible $h$ with $\ell(h)<n/7$ and $hxh^{-1}=y$.

\end{proof}

From here, we complete the proof of the theorem. Observe that for any $n>\ell(a)$, there is an element $x \in C(a)$ with $\ell(x) =n $ or $\ell(x) = n+1$. This existence appears as a consequence of the presence of $y \in C(a)$ with $\ell(y)>n+1$, and our ability to write $y = s_{N} s_{N-1} \cdots s_1 a s_1^{-1} \cdots s_N^{-1}$, where each $s_i$ is one of the generators. Then
\[|\ell(s_{k+1} \cdots s_1as_1^{-1} \cdots s_{k+1}^{-1}) - \ell(s_{k} \cdots s_1as_1^{-1} \cdots s_{k}^{-1})| \leq 2\]
and $\ell(a)<n$, from which we deduce that there is $k$ such that $\ell(s_{k} \cdots s_1as_1^{-1} \cdots s_{k}^{-1})  \in \{n,n+1\}$.

Let $n>\ell(a)+1$, satisfy that there is some $x \in C(a)$ with $\ell(x)=n$. Then for $g \in B_{n/7}$, we may consider $gxg^{-1} \in C(a)$. Then $5n/7<\ell(gxg^{-1})<9n/7$, as a result of which we can map $B_{n/7}$ to the elements of the conjugacy class of $a$ of length in the range $\left[\frac{5n}{7}, \frac{9n}{7}\right]$:
\[\phi: B_{n/7} \to C(a) \cap (B_{9n/7} - B_{5n/7}).\]

By the above lemma, the pre-image of any element under $\phi$ has size at most $C_2\cdot n^2+C_3$, for constants $C_2,C_3$ depending only on $a$ and $G$. However, $|B_{n/7}|$ grows exponentially by Koubi's theorem (Theorem 1.1 in \cite{K}), which states that non-elementary hyperbolic groups have exponential growth. As a result, the number of elements in the image of $\phi$ is bounded below by $C\cdot e^{\alpha n}/n^2+D$ for constants $C$ and $D$.

Since for any $n$, there is $x \in C(a)$ with length either $n$ or $n+1$, we get that the conjugacy class has exponential growth, as desired.

\end{proof}

\begin{cor} For $G$ a non-elementary hyperbolic group and $a \in G$ a non-torsion element, the conjugacy class of $a$ grows exponentially.
\label{cor non tors conj exp}
\end{cor}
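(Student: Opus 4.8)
The plan is to derive the corollary directly from Theorem \ref{exp conj class}. By that theorem, it suffices to show that a non-torsion element $a$ in a non-elementary hyperbolic group $G$ has infinite conjugacy class $C(a)$. So the entire task reduces to proving that the conjugacy class of a non-torsion element cannot be finite.

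First I would observe that if $C(a)$ were finite, then the centralizer $Z(a) = \{g \in G : gag^{-1}=a\}$ would have finite index in $G$, since $C(a)$ is in bijection with the coset space $G/Z(a)$. A standard fact about hyperbolic groups is that the centralizer of a non-torsion (infinite-order) element is virtually cyclic: the element $a$ acts by translation along a quasi-geodesic axis, and any element commuting with $a$ must coarsely preserve this axis, forcing $Z(a)$ to be elementary (i.e. to contain $\zz$ as a finite-index subgroup). I would cite the relevant structural result — that in a hyperbolic group the centralizer of an infinite-order element is virtually $\zz$ — from the standard references on hyperbolic groups.

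Next I would combine the two finiteness statements: if $C(a)$ is finite then $[G:Z(a)] < \infty$, and $Z(a)$ is virtually cyclic, hence so is the finite-index overgroup $G$. A finite-index subgroup of a virtually cyclic group is virtually cyclic, and conversely a group containing a virtually-cyclic finite-index subgroup is itself virtually cyclic; in either case $G$ would contain $\zz$ as a finite-index subgroup, meaning $G$ is elementary. This contradicts the hypothesis that $G$ is non-elementary. Therefore $C(a)$ must be infinite, and Theorem \ref{exp conj class} then gives that $C(a)$ grows at least exponentially, which is the claim.

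The main obstacle is the structural input about centralizers — namely the fact that in a hyperbolic group the centralizer of an infinite-order element is virtually cyclic. This is where all the geometry lives, and one must be careful that it is the \emph{non-torsion} hypothesis that is doing the work (a torsion element can have finite conjugacy class and large centralizer, so the argument genuinely needs $a$ to have infinite order to produce a translation axis). Everything else — the bijection $C(a) \leftrightarrow G/Z(a)$, the index bookkeeping, and the final invocation of the theorem — is routine. If one wished to avoid quoting the centralizer result as a black box, one could instead argue more directly that a finite conjugacy class forces $\langle a \rangle$ to be normalized by a finite-index subgroup, and then use that a non-elementary hyperbolic group has no infinite cyclic normal-up-to-finite-index subgroup; but invoking virtual cyclicity of centralizers is the cleanest route.
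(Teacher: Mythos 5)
Your proposal is correct and follows essentially the same route as the paper: reduce to showing $C(a)$ is infinite, note that a finite conjugacy class would force the centralizer $Z(a)$ to have finite index, invoke the structural fact that the centralizer of an infinite-order element in a hyperbolic group is virtually cyclic (the paper cites Theorem 3.35 of \cite{Papa} for this), and conclude that $G$ would then be elementary, a contradiction. No substantive differences.
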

\begin{proof} By the theorem, it suffices to show that the conjugacy class is infinite. Let $C(a)$ be the conjugacy class of $a$ and $Z(a)$ be the centraliser. The fact that $C(a) = G/Z(a)$ implies that if the conjugacy class is finite, then $Z(a)\subset G$ has finite index. By Theorem 3.35 of \cite{Papa}, however, the centraliser of an infinite order element in a hyperbolic group is a finite extension of the subgroup generated by the element. As a result, $Z(a)$ is a finite extension of $\la a \ra$, and is thus virtually cyclic. Applying that any group is quasi-isometric to any of its finite index subgroups, we would then have derived that $G$ is also virtually cyclic, resulting in the desired contradiction.
\end{proof}

\section{On traces}
\label{on traces}

In this section, we show the main result; we classify all traces on non-elementary word-hyperbolic groups.

\begin{thm} For $G$ a non-elementary word-hyperbolic group, the traces $\theta:C^*_{\red}(G) \to \cc$ are exactly the linear combinations of $\chi_g:C^*_{\red}(G) \to \cc$ given by 
\[\chi_{g}=\begin{cases} 1 & g \in C(g) \\ 0 & \text{else}\end{cases},\]
where $g$ is an element of finite conjugacy class and $C(g)$ is its conjugacy class.
\label{all traces}
\end{thm}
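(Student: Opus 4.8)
The plan is to prove both inclusions: that every trace is a finite linear combination of the $\chi_g$ (with $g$ ranging over elements of finite conjugacy class), and conversely that every such combination is a trace. For the forward direction I would first record that a bounded linear functional $\theta$ on $C^*_{\red}(G)$ is determined by its restriction to the dense subalgebra $\cc[G]$, hence by the scalars $\theta(g)$ for $g \in G$. The trace identity $\theta(ghg^{-1}) = \theta(h)$ shows that $g \mapsto \theta(g)$ is constant on each conjugacy class; write $\theta_{[g]}$ for this common value.

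The analytic core is to show $\theta_{[g]} = 0$ whenever $C(g)$ is infinite. Here I would invoke that non-elementary word-hyperbolic groups have property RD (\cite{Joli1}), so that the vanishing criterion of \cite{G} applies: any trace vanishes on a group element whose conjugacy class grows super-polynomially. By Theorem \ref{exp conj class}, every infinite conjugacy class in $G$ grows at least exponentially, a fortiori super-polynomially, so indeed $\theta(g) = 0$ for all $g$ with $C(g)$ infinite.

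It remains to treat the finite conjugacy classes, and the key structural point is that there are only finitely many of them. The set $N$ of elements of $G$ with finite conjugacy class is a normal subgroup (the FC-center), and by the preceding corollary every element of $N$ is torsion. Since an infinite subgroup of a hyperbolic group always contains an element of infinite order (equivalently, torsion subgroups of hyperbolic groups are finite), $N$ is finite. Let $C(g_1), \dots, C(g_r)$ be the finite conjugacy classes. Then for $x = \sum_h c_h h \in \cc[G]$ we have
\[\theta(x) = \sum_h c_h \theta_{[h]} = \sum_{i=1}^r \theta_{[g_i]} \sum_{h \in C(g_i)} c_h = \sum_{i=1}^r \theta_{[g_i]}\, \chi_{g_i}(x).\]
Each $\chi_{g_i}$ is bounded, being a finite sum of coefficient-extraction functionals $T \mapsto \langle T\delta_e, \delta_h\rangle$, each of norm $\le 1$; hence both $\theta$ and $\sum_i \theta_{[g_i]}\chi_{g_i}$ are bounded and agree on the dense subalgebra $\cc[G]$, so they coincide on all of $C^*_{\red}(G)$. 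This exhibits $\theta$ as a finite linear combination of the $\chi_{g_i}$.

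For the converse I would check that each $\chi_g$ with $C(g)$ finite is itself a trace: it is bounded as above, and on group elements $\chi_g(h_1 h_2) = \chi_g(h_2 h_1)$ because $h_1 h_2$ and $h_2 h_1$ are conjugate, so $\chi_g(xy) = \chi_g(yx)$ holds first on $\cc[G]$ and then, by continuity, on $C^*_{\red}(G)$; any linear combination of traces is again a trace. The step I expect to require the most care is the finiteness of the FC-center $N$: one must combine the torsion conclusion of the corollary with the fact that infinite subgroups of hyperbolic groups contain elements of infinite order, and it is precisely this that guarantees the phrase ``linear combination'' can be read as a \emph{finite} sum rather than an a priori infinite (and possibly divergent) expression.
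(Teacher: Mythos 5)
Your argument is correct and follows essentially the same route as the paper: property RD plus the vanishing criterion of \cite{G} kills every infinite conjugacy class via Theorem \ref{exp conj class}, the centraliser argument shows finite conjugacy classes consist of torsion elements, and the finitely many remaining classes give the $\chi_g$. The only divergence is how finiteness of the set of elements with finite conjugacy class is obtained: the paper combines the torsion observation with the fact that a hyperbolic group has only finitely many conjugacy classes of torsion elements, whereas you apply finiteness of torsion subgroups of hyperbolic groups to the FC-center $N$ --- both are standard facts and either one closes the gap you rightly identify (that the linear combination must be a finite sum).
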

\begin{proof}[Proof of Theorem \ref{all traces}] As was shown in \cite{Papa} [Theorem 3.27], in a hyperbolic group, there are only finitely many conjugacy classes of torsion elements. We demonstrated in the proof of Corollary \ref{cor non tors conj exp} that conjugacy classes of non-torsion elements are infinite. As consequence, there are only finitely many elements that have finite conjugacy class.

For each such element $g$, it is easy to see that the map
\[\chi_{g}=\begin{cases} 1 & g \in C(g) \\ 0 & \text{else}\end{cases}\]
defines a bounded linear functional $C^*_{\red}(G) \to \cc$, and is consequently a trace.

To verify that every bounded trace is a linear combination of the finitely many aforementioned traces, we will use the fact that word hyperbolic groups have property RD:

\begin{defn} Given a length function on a group, there is a norm $\|\cdot\|_{H^s}$ on $\cc G$ given by
\[\|\sum c_g g\|_{H^s}^2 = \sum |c_g|^2(1+l(g))^{2s}.\]
Let $H^s(G)$ denote the completion of $\cc G$ with respect to the aforementioned norm. Then we say $G$ has rapid decay property, or property RD if for some $s$, there is a length function on it and a constant $C$ such that
\[\|\sum c_g g\|_{\red} \leq C\|\sum c_g g\|_{H^s}.\]
\end{defn}

It was shown in \cite{Joli1} that all hyperbolic groups have property RD.

We will now apply Lemma 2.8 in \cite{G}, which states the following:

\begin{thm}
Let $G$ be a group with property RD and $g \in G$ be an element whose conjugacy class grows faster than polynomially. Then all traces $C^*_{\red}G \to \cc$ vanish on $g$.
\end{thm}

In our case, word-hyperbolic groups have property RD. Ergo, applying Theorem \ref{exp conj class}, we have that all traces $C^*_{\red}G \to \cc$ vanish on all elements with infinite conjugacy class, and they must be linear combinations of the above traces, as desired.

\end{proof}

In particular, this gives another proof of de la Harpe's result regarding traces on torsion-free non-elementary word-hyperbolic groups, which can be found in \cite{dlH1}:

\begin{cor} For $G$ a torsion-free, non-elementary, word-hyperbolic group, there is only one trace $\tau:C^*_{\red}G \to \cc$.
\end{cor}

Theorem \ref{all traces} also implies that every hyperbolic group can be written as an extension of a hyperbolic group with unique trace by a finite group. More precisely:

\begin{thm} Let $G$ be a hyperbolic group. Then it has a finite normal subgroup $N$ such that $G/N$ is a hyperbolic group with unique trace.
\label{normal subgp}
\end{thm}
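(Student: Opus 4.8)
The plan is to take $N$ to be the \emph{FC-center} of $G$, that is, the set of elements of $G$ whose conjugacy class is finite, and then to verify the three required properties: that $N$ is a finite normal subgroup, that $G/N$ is hyperbolic, and that $G/N$ has a unique trace up to scalars. We may assume $G$ is non-elementary, which is the setting of Theorem \ref{all traces}; a finite group is handled at once by taking $N=G$, whose quotient is trivial. First I would record that $N$ is a normal subgroup: the finite-conjugacy-class elements are closed under products and inverses, since a conjugate of $gh$ is a product of a conjugate of $g$ and a conjugate of $h$ and only finitely many such arise, while the defining condition is manifestly conjugation-invariant. That $N$ is finite is exactly the content of the first part of the proof of Theorem \ref{all traces}: every infinite-order element of a non-elementary hyperbolic group has infinite conjugacy class, so $N$ consists of torsion elements, and these fall into only finitely many conjugacy classes by \cite{Papa}, each of which is finite.

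Next I would check that $G/N$ is again a non-elementary hyperbolic group. Since $N$ is finite, the quotient map $\pi\colon G \to G/N$ is a quasi-isometry, and hyperbolicity is a quasi-isometry invariant, so $G/N$ is hyperbolic. Moreover $G/N$ is infinite and shares the exponential growth of $G$ (coming from the exponential growth of non-elementary hyperbolic groups), so it is again non-elementary. This is what will allow me to invoke Theorem \ref{all traces} for $G/N$ itself.

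The heart of the argument, and the step I expect to be the main obstacle, is showing that the FC-center of $G/N$ is trivial, so that $\{eN\}$ is its only finite conjugacy class. Suppose $gN$ has finite conjugacy class in $G/N$; then its centralizer has finite index in $G/N$, and pulling back along $\pi$ produces a finite-index subgroup $K \le G$ with $\pi(kgk^{-1}) = \pi(g)$, i.e. $kgk^{-1} \in gN$, for every $k \in K$. Because $N$ is finite, the set of $K$-conjugates $\{kgk^{-1} : k \in K\}$ is contained in the finite set $gN$, so the centralizer $Z_K(g)$ satisfies $[K : Z_K(g)] \le |N| < \infty$, and hence $[G : Z_G(g)] \le [G:K]\,[K:Z_K(g)] < \infty$. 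Thus $g$ has finite conjugacy class in $G$, so $g \in N$ and $gN$ is trivial. The delicate point here is that one might a priori expect new finite conjugacy classes to appear in the quotient; the finiteness of $N$ is precisely what forces the FC-series to terminate immediately.

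With the FC-center of $G/N$ shown to be trivial, the only element of $G/N$ with finite conjugacy class is the identity, so Theorem \ref{all traces} applied to the non-elementary hyperbolic group $G/N$ shows that its space of traces is spanned by the single functional $\chi_{eN}$, i.e. $G/N$ has a unique trace up to scalar multiples. This completes the construction of the desired extension.
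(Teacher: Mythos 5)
Your proposal is correct, and it follows the paper's construction for most of the argument: you take $N$ to be the set of elements with finite conjugacy class, prove it is a finite normal subgroup using the finiteness of the torsion conjugacy classes and the closure of finite-index centralizers under intersection, and get hyperbolicity of $G/N$ from the quasi-isometry $G \to G/N$. Where you genuinely diverge is the final step. The paper never looks at conjugacy classes in $G/N$ at all: it takes an arbitrary trace $\tau$ on $C^*_{\red}(G/N)$, pulls it back along the induced map $C^*_{\red}(G) \to C^*_{\red}(G/N)$ (which exists because $N$ is finite), and applies Theorem \ref{all traces} to $G$ itself to conclude that the pullback, hence $\tau$, vanishes off the identity coset. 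You instead prove a purely group-theoretic lemma --- that the FC-center of $G/N$ is trivial, via the finite-index centralizer pullback $[G:Z_G(g)] \le [G:K][K:Z_K(g)] \le [G:K]\,|N|$ --- and then apply Theorem \ref{all traces} to $G/N$ directly. Your route costs you two extra verifications the paper avoids (that $G/N$ is non-elementary, which you supply, and the FC-center computation), but it buys you independence from the $C^*$-algebraic functoriality of the quotient map, and the ``FC-series terminates immediately'' lemma is of independent interest. Conversely, the paper's route is shorter but leans on the (unstated in the proof) fact that the quotient by a finite normal subgroup induces a $*$-homomorphism of reduced group $C^*$-algebras. One shared caveat: both you and the paper implicitly exclude the infinite elementary case (e.g.\ $G = \zz$, where the FC-center is infinite and the conclusion fails as stated); you dispose of finite $G$ explicitly but should either note this exclusion or observe that the theorem must be read with $G$ non-elementary.
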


\begin{proof} Let $N \subset G$ be the subset consisting of elements with finite conjugacy class. Note that all elements in it have finite order, as we showed in the proof of Theorem \ref{all traces}. 

\begin{lemma}
We claim that $N$ is a subgroup.
\end{lemma}
\begin{proof} It is easy to see that $N$ is closed under inversion, and it therefore suffices to show that if $g_1$ and $g_2$ are in $N$, then $g_1g_2$ is in $N$. For an element $g \in G$, let $Z(g) \subset G$ denote the subgroup consisting of elements that commute with $g$. Observe that $g$ is in $N$ if and only if $Z(g)$ has finite index as a subgroup of $G$. Thus, it suffices to show that if $Z(g_1)$ and $Z(g_2)$ have finite index, then $Z(g_1g_2)$ does as well.

Note that $Z(g_1g_2)$ contains the intersection of $Z(g_1)$ and $Z(g_2)$, rendering it sufficient to show that if $H_1$ and $H_2$ are finite index subgroups of $G$, then $H_1 \cap H_2$ is as well. There is a natural injection $H_1/H_1 \cap H_2 \to G/H_2$. Thus, $|H_1/H_1 \cap H_2 |<\infty$. Let $a_1,a_2,\ldots a_n$ and $b_1,b_2,\ldots b_m$ satisfy
\[G = a_1H_1 \cup a_2 H_1 \cup \cdots \cup a_n H_1\]
and
\[H_1 = b_1 (H_1 \cap H_2) \cup b_2 (H_1 \cap H_2) \cup \cdots \cup b_m (H_1 \cap H_2).\]
Then $G = \bigcup_{i,j} a_ib_j(H_1 \cap H_2)$. Therefore, $H_1 \cap H_2$ has finite index, as desired.
\end{proof}

By the lemma, we have that $N$ is a subgroup of $G$. As we determined in the proof of Theorem \ref{all traces}, $N$ is finite. Also, it is easy to see that $N$ is closed under conjugation. Hence, $N$ is a finite normal subgroup.

Note that $G/N$ is hyperbolic, on the grounds that hyperbolicity is preserved under quasi-isometries, and it is easy to see that the map $G \to G/N$ is a quasi isometry. Thus, $G/N$ is hyperbolic. It remains to show that $G/N$ has unique trace $\tau:C^*_{\red}(G/N) \to \cc$. Such a trace $\tau$ induces a trace $C^*_{\red}(G) \to \cc$, because $N$ is a finite normal subgroup. By Theorem \ref{all traces}, the trace $C^*_{\red}(G) \to \cc$ vanishes on all elements outside of $N$; ergo the map $\tau: C^*_{\red}(G/N) \to \cc$ vanishes on all elements except the identity, as desired.

\end{proof}

\section{Examples}
\label{examples}

In this section, we apply Theorem \ref{all traces} to give some examples of hyperbolic groups with torsion whose reduced $C^*$-algebras have unique trace, and some examples in which the reduced $C^*$-algebras have more than one trace.

\begin{example} Let $G = G_1 * G_2$ be the free product of two non-trivial hyperbolic groups $G_1$ and $G_2$. Suppose that $|G_1|>2$.\footnote{This assumption does not lose any generality, because $\zz/2 * \zz/2$ is elementary.} Then $G$ is hyperbolic and all its conjugacy classes are infinite. The hyperbolicity results from the general fact that the free product of any two hyperbolic groups is hyperbolic. To see that the conjugacy classes have infinite order:

Observe that every element $x \in G$ can be uniquely expressed as $x=g_1h_1g_2h_2\cdots g_nh_n$, where $g_i \in G_1$, $h_i \in G_2$ and $g_i \neq 1$ for $i>1$ and $h_i \neq 1$ for $i<n$.

In the case $g_1 = 1$ and $h_n \neq 1$, consider some element $g \neq 1 \in G_1$ and $h \neq 1 \in G_2$, and consider $hghg \cdots hghgxg^{-1}h^{-1} g^{-1}h^{-1}\cdots g^{-1}h^{-1} = (hg)^tx(hg)^{-t}$, that is conjugation of $x$ by $(hg)^t$. As $t$ varies amongst positive integers, $(hg)^tx(hg)^{-t}$ goes through pairwise distinct elements of the conjugacy class of $x$ in $G$.

The case where $g_1  \neq 1$ and $h_n = 1$ is similar.

For the case $g_1 \neq 1$, and $h_n \neq 1$, conjugate by $(hg)^t$, where $g \in G_1$ has $g \neq 1$ and $g \neq g_1^{-1}$. Then the same argument as above applies. Similarly, for the case $g_1 =1$ and $h_n=1$, pick $g \in G_1$, with $g \neq 1$ and $g \neq g_n^{-1} \in G_1$.

Consequently, we have that $G$ is a hyperbolic group with infinite conjugacy classes, implying, by Theorem \ref{all traces}, that whenever such a group is non-elementary, its reduced $C^*$-algebra has unique trace.

This example verifies that many hyperbolic groups with torsion elements still have reduced $C^*$-algebras with unique traces, extending de la Harpe's results in \cite{dlH0}, \cite{dlH1}, and \cite{dlH2}, which states that torsion-free non-elementary hyperbolic groups have reduced $C^*$-algebras with unique traces.

\end{example}

Now let us give an example of a non-elementary hyperbolic group that does not have unique trace. For this, by Theorem \ref{all traces}, it suffices that there are non-identity elements with finite conjugacy class.

There are obvious examples of such groups:
\begin{example} Let $G$ by a non-elementary hyperbolic group and $G_1$ be a non-trivial finite group. Then $G_1 \times G$, the direct product of $G_1$ and $G$, is hyperbolic, for it is quasi-isometric to $G$. The subset $G_1 \times e$ of $G_1 \times G$, where $e$ is the identity in $G$, is clearly preserved under conjugation. It is moreover finite, implying that any element in it has finite conjugacy class.
\end{example}

Here is an example of a non-elementary hyperbolic group that does not have only one trace, but which is not the direct product of a finite group and a hyperbolic group with unique trace.

\begin{example} Let $G=\la x,y,a \ra/\la a^3,aya^{-1}y^{-1}, xax^{-1}a^{-2}\ra$. This group $G$ is hyperbolic, in that it is quasi-isometric to the free group on two generators, $\la x,y,\ra \subset G$. In $G$, $a$ and $a^{2}$ clearly have finite conjugacy class, as their conjugacy class is $\{a,a^2\}$.  Thus, by Theorem \ref{all traces}, $G$ does not have unique trace. 

However, it is not the direct product of a finite group and a hyperbolic group whose reduced $C^*$-algebra has unique trace, because the only elements of finite order in $G$ are $a$ and $a^2$; ergo, if $G$ were such a direct product, it would be $\zz/3\zz \times G_1$ for some hyperbolic group $G_1$, but in that case, $a$ would not be conjugate to $a^2$.
\end{example}


\noindent Department of Mathematics, Massachusetts Institute of Technology,

\noindent Cambridge, MA 02139, USA

\noindent E-mail: \url{sgongli@mit.edu}


\begin{thebibliography}{1}
\bibitem{AM} G.N. Arzhantseva, A. Minasyan, {\em Relatively hyperbolic groups are $C^*$-simple}, Journal of Functional Analysis, 243(1)  (2007), 345-351.

\bibitem{dlH0} M. Bekka, M. Cowling, P. de la Harpe {\em Some groups whose reduced $C^*$-algebra is simple}, Publications math\'ematiques de l'I.H.\'E.S., tome 80 (1994), p. 117-134.


\bibitem{CM} A. Connes, H. Moscovici. {\em Cyclic cohomology, the novikov conjecture and hyperbolic groups}, Topology, 29(3):345-388, 1990

\bibitem{Con} A. Connes, {\em Noncommutative Geometry}, Academic Press, San Diego, CA, 1994, 661 p., ISBN 0-12-185860-X.


\bibitem{dlH1} P. de la Harpe, {\em Groupes hyperboliques, alg\`ebres d’op\'erateurs et un th\'eor\`eme de Jolissaint}, C. R. Acad. Sci. Paris S\'er. I Math. 307 (1988), no. 14, 771-774.

\bibitem{dlH2} P. de la Harpe. {\em On operator algebras, free groups, and other groups}, Ast\'{e}risque 232 (S.M.F. 1995) 121-153.  (1995), 121-153.

\bibitem{G} S. Gong. {\em Finite part of operator $K$-theory for groups with rapid decay}, to appear in Journal of Noncommutative Geometry.

\bibitem{H} U. Haagerup, {\em An example of a non-nuclear $C^*$-algebra which has the metric approximation property}, Inventiones Mathematicae 50, (1979) 279-293.

\bibitem{Joli1} P. Jolissaint. {\em Rapidly decreasing functions in reduced $C^*$-algebras of groups}, Trans. Amer. Math. Soc. 317 (1990), 167-196.

\bibitem{Joli-K} P. Jolissaint. {\em K-theory of reduced $C^*$-algebras and rapidly decreasing functions on groups}, $K$-theory, (1989), no. 6, 723-735.

\bibitem{K} M. Koubi, {\em Croissance uniforme dans les groupes hyperboliques}, Annales de l’institut Fourier, tome 48, no 5 (1998), p. 1441-1453.

\bibitem{Papa} P. Papasoglu. {\em Notes on hyperbolic and automatic groups}, \url{https://www.math.ucdavis.edu/~kapovich/280-2009/hyplectures_papasoglu.pdf}.

\bibitem{Pow} R.T. Powers, {\em Simplicity of the $C^*$-algebra associated with the free group on two generators}, Duke Math. J. 42 (1975), 151-156.

\bibitem{WY} S. Weinberger, G. Yu {\em Finite part of operator K-theory for groups finitely embeddable into Hilbert space and the degree of non-rigidity of manifolds}, 2013.

\bibitem{XY1} Z. Xie and G. Yu. {\em A relative higher index theorem, diffeomorphisms and positive scalar curvature}, Advances in Mathematics, 250 (2014) 35–-73.

\bibitem{XY2} Z. Xie and G. Yu. {\em Higher rho invariants and the moduli space of positive scalar curvature metrics}, arXiv:1310.1136, 2012


\end{thebibliography}
\end{document}